\numberwithin{equation}{section}
\newtheorem{theorem}{Theorem}[section]
\newtheorem{proposition}[theorem]{Proposition}
\newtheorem{definition}[theorem]{Definition}
\newcommand{\al}{\alpha}
\newcommand{\gm}{\gamma}
\newcommand{\ep}{\varepsilon}
\newcommand{\tht}{\theta}
\newcommand{\ld}{\lambda}
\newcommand{\q}{\quad}
\newcommand{\qq}{\qquad}
\newcommand{\wh}{\widehat}
\newcommand{\R}{\mathbb{R}}
\newcommand{\N}{\mathbb{N}}
\newcommand{\rd}{\mathbb R ^d}
\newcommand{\va}{{\Vec{a}}}
\newcommand{\vc}{{\Vec{c}}}
\newcommand{\vs}{{\Vec{s}}}
\newcommand{\vt}{{\Vec{t}}}
\newcommand{\vsig}{{\Vec{\sigma}}}
\begin{document}
\title[Multivariable finite Euler products]{Behaviors of multivariable finite Euler products in probabilistic view}
\author[T.~Aoyama and T.~Nakamura]{Takahiro Aoyama \& Takashi Nakamura}
\address{Department of Mathematics Faculty of Science and Technology \\Tokyo University of Science Noda, CHIBA 278-8510 JAPAN}
\email{aoyama$\_$takahiro$@$ma.noda.tus.ac.jp, nakamura$\_$takashi$@$ma.noda.tus.ac.jp}
\subjclass[2010]{Primary 60E07, Secondary 11M41}
\keywords{characteristic function, finite Euler product, infinite divisibility}
\maketitle

\begin{abstract}
Finite Euler product is known to be one of the classical zeta functions in number theory.
In \cite{AN12e}, \cite{AN12s} and \cite{AN11k}, we have introduced some multivariable zeta functions and studied their definable probability distributions 
on $\rd$.
They include functions which generate infinitely divisible, not infinitely divisible characteristic functions and not even to be characteristic functions.
In this paper, we treat some multivariable finite Euler products and show how they behave in view of such properties.
\end{abstract}

\section{Introduction}

Infinitely divisible distributions are known as one of the most important class of distributions in probability theory.
They are the marginal distributions of stochastic processes having independent and stationary increments such as Brownian motion and Poisson processes. 
In 1930's, such stochastic processes were well-studied by P. L\'evy and now we usually call them L\'evy processes.
We can find the detail of L\'evy processes in \cite{S99}.

\vskip3mm

In this section, we mention some known properties of infinitely divisible distributions.

\begin{definition}[Infinitely divisible distribution]
A probability measure $\mu$ on $\rd$ is infinitely divisible if,
for any positive integer $n$, there is a probability measure $\mu_n$ on 
$\rd$
such that
\begin{equation*}
\mu=\mu_n^{n*},
\end{equation*}
where $\mu_n^{n*}$ is the $n$-fold convolution of $\mu_n$.
\end{definition}

It should be noted that Normal, degenerate, Poisson and compound Poisson distributions are infinitely divisible.

\vskip3mm

Denote by $ID(\rd)$ the class of all infinitely divisible distributions on $\rd$.
Let $\wh{\mu}(t):=\int_{\rd}e^{{\rm i}\langle t,x\rangle}\mu (dx),\, t\in\rd,$ be the characteristic function of a distribution $\mu$, 
where $\langle\cdot ,\cdot\rangle$ is the inner product.

\vskip3mm

The following is well-known.

\begin{proposition}[L\'evy--Khintchine representation (see, e.g.\,\cite{S99})]\label{pro:LK1}
$(i)$ If $\mu\in ID(\rd)$, then
\begin{equation}\label{INF}
\wh{\mu}(t) = \exp\left[-\frac{1}{2} \langle t,At \rangle+{\rm i}
\langle \gm,t \rangle+\int_{\rd}\left(e^{{\rm i} \langle t,x \rangle }-1-
\frac{{\rm i} \langle t,x \rangle}{1+|x|^2}\right)\nu(dx)\right],\,t\in\rd,
\end{equation}
where $A$ is a symmetric nonnegative-definite $d \times d$ matrix,
$\nu$ is a measure on $\rd$ satisfying
\begin{equation}\label{lev}
\nu(\{0\}) = 0 \,\,and\,\,\int_{\rd} (|x|^{2} \wedge 1) \nu(dx) < \infty,
\end{equation}
and $\gm\in\rd$.

$(ii)$ The representation of $\wh{\mu}$ in $(i)$ by $A, \nu,$ and $\gm$ is
unique.

$(iii)$ Conversely, if $A$ is a symmetric nonnegative-definite $d\times d$
matrix, $\nu$ is a measure satisfying $\eqref{lev}$, and $\gm\in\rd$, then
there exists an infinitely divisible distribution $\mu$ whose characteristic function is given by $\eqref{INF}$.
\end{proposition}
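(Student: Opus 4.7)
The plan is to follow the classical Poisson approximation route to the L\'evy--Khintchine theorem, treating the three parts separately.

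For (i), I first establish that $\wh\mu(t)\neq 0$ for every $t\in\rd$: from $\wh\mu=\wh\mu_n^{\,n}$ one has $|\wh\mu(t)|^{2/n}=|\wh\mu_n(t)|^2$, which is a characteristic function (of $\mu_n*\bar\mu_n$) continuous at $0$; its pointwise limit is the indicator of $\{\wh\mu\neq 0\}$, and continuity forces this set to be all of $\rd$. Hence a unique continuous logarithm $\psi(t):=\log\wh\mu(t)$ with $\psi(0)=0$ exists, and $\wh\mu_n(t)=\exp(\psi(t)/n)$. Using $\log(1+z)=z+O(z^2)$ I obtain
\[
  n\bigl(\wh\mu_n(t)-1\bigr)=\int_{\rd}\bigl(e^{{\rm i}\la t,x\ra}-1\bigr)\,n\mu_n(dx)\longrightarrow\psi(t),
\]
uniformly on compacts. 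I would then subtract the compensator $-{\rm i}\la t,x\ra/(1+|x|^2)$ and pass to the auxiliary \emph{finite} measures $\nu_n(dx):=\tfrac{|x|^2}{1+|x|^2}\,n\mu_n(dx)$, whose total mass is controlled by $-\Real\psi$ at a suitable $t$. By Helly's selection theorem a subsequence converges vaguely to a finite measure $\rh$; setting $\nu(dx):=\tfrac{1+|x|^2}{|x|^2}\rh(dx)$ on $\rd\setminus\{0\}$ produces a L\'evy measure satisfying \eqref{lev}, while the atom of $\rh$ at the origin delivers the quadratic form $\la t,At\ra$, and $\gm$ is identified as the remaining linear term.

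For (ii), uniqueness follows by inverting the representation: $\psi$ is uniquely determined by $\wh\mu$, and from $\psi$ one recovers $(A,\nu,\gm)$ via the standard averaging $\psi(t)-\tfrac12\bigl(\psi(t+s)+\psi(t-s)\bigr)$, which annihilates the drift and Gaussian parts and exposes a Fourier transform of $\nu$; then $A$ and $\gm$ are read off in turn. For (iii), my construction is the usual one: take $\mu_G$ centered Gaussian with covariance $A$, $\mu_D=\dl_\gm$, and for each $\ep>0$ let $\mu_\ep$ be the compensated compound Poisson with L\'evy measure $\nu|_{\{|x|>\ep\}}$ and the appropriate centering drift. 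The assumption $\int(|x|^2\wedge 1)\,\nu(dx)<\infty$ makes $\{\mu_\ep\}$ convergent as $\ep\dar 0$, and the convolution $\mu_G*\mu_D*\lim_{\ep\dar 0}\mu_\ep$ is an infinitely divisible probability measure with characteristic function \eqref{INF}.

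The main obstacle is the limit passage in (i): one must simultaneously control $n\mu_n$ near the origin (to extract $A$), on compact annuli (to build $\nu$), and at infinity (for tightness), while keeping the compensator intact throughout the limit. The auxiliary measures $\tfrac{|x|^2}{1+|x|^2}\,n\mu_n(dx)$ are the classical device that tames all three regimes at once, and the bulk of the technical work lies in verifying that the limit obtained along a subsequence in fact does not depend on the subsequence, which follows a posteriori from the uniqueness result in (ii) applied to the limit exponent.
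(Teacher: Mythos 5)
The paper does not prove this proposition at all: it is quoted as a classical, known result with a pointer to Sato's monograph \cite{S99} (Theorem~8.1 there), and the rest of the paper simply \emph{uses} the representation \eqref{INF} rather than deriving it. So there is no in-paper argument to compare against.

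That said, your sketch is a faithful outline of the standard Poisson-approximation route (essentially Sato~\cite{S99}, \S8, or Feller): non-vanishing of $\wh\mu$ via the continuity theorem applied to $|\wh\mu|^{2/n}$, passage from $n(\wh\mu_n-1)$ to the distinguished logarithm $\psi$, introduction of the finite auxiliary measures $\tfrac{|x|^2}{1+|x|^2}\,n\mu_n$, Helly selection, and then reassembly of $(A,\nu,\gm)$; uniqueness by the averaging $\psi(t)-\tfrac12(\psi(t+s)+\psi(t-s))$, which removes the linear part and turns the Gaussian part into an additive constant in $t$; existence by convolving a Gaussian, a point mass, and the limit of compensated compound Poisson laws. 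One small imprecision worth flagging: the integrand
$\bigl(e^{{\rm i}\la t,x\ra}-1-\tfrac{{\rm i}\la t,x\ra}{1+|x|^2}\bigr)\tfrac{1+|x|^2}{|x|^2}$
does \emph{not} extend continuously to $x=0$ (its limit along the ray $x=ru$ is $-\tfrac12\la t,u\ra^2$, which depends on the direction $u$), so the assertion that ``the atom of $\rh$ at the origin delivers $\la t,At\ra$'' needs the usual extra step: one separates a shrinking ball around $0$, takes a directional/second-moment limit there to obtain the nonnegative-definite $A$, and only then lets the rest of $\rh$ produce $\nu$. You already signal this as the technical crux in your closing paragraph, so the outline is sound; just be aware that this is where the real work hides, and it is also exactly the point at which uniqueness of the subsequential limit must be invoked.
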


The measure $\nu$ and $(A, \nu ,\gm)$ are called the L\'evy measure and the L\'evy--Khintchine triplet of $\mu\in I(\rd)$, respectively.

\vskip3mm

The study of infinite divisibility has been a major subject in probability theory and also gives us many applications in the study of stochastic differential equations as well.
Stationary distributions of a generalized Ornstein--Uhlenbeck process associated with a certain bivariate L\'evy process is studied 
by Lindner and Sato \cite{LSa}.
Afterwards, they extended them by defining a sequence of bivariate L\'evy processes given above in \cite{LSm}.
(Detail of their definitions, see \cite{LSa} and \cite{LSm}.)
What was interesting in their results in \cite{LSm} is that there appear non-infinitely divisible distributions including a certain known class of distributions by its expansion.
That class is called {\it quasi-infinitely divisible distributions} which is defined as follows.

\begin{definition}[Quasi-infinitely divisible distribution]
A distribution $\mu$ on $\rd$ is called {\it quasi-infinitely divisible} if it has a form of \eqref{INF} and the corresponding measure $\nu$ is a signed 
measure on $\rd$ with total variation measure $|\nu|$ satisfying $\nu(\{0\}) =0$ and $\int_{\rd} (|x|^{2} \wedge 1) |\nu|(dx) < \infty$.
\end{definition}

Note that the triplet $(A,\nu ,\gm)$ in this case is also unique if each component exists and that infinitely divisible distributions on $\rd$ are 
quasi-infinitely divisible if and only if the negative part of $\nu$ in the Jordan decomposition is zero.
The measure $\nu$ is called {\it quasi}-L\'evy measure.
Without the name of quasi-infinitely divisible distributions, its property was used in books and papers such as Gnedenko and Kolmogorov \cite{GK68} (p.81),
Linnik and Ostrovskii \cite{LO} (Chapter 6, Section 7) and Niedbalska-Rajba \cite{N-R}.
(See, \cite[Introduction]{LSm}.)
Some of main results in \cite{LSm} were to classify distributions appeared in their story into the classes $ID(\R)$, $ID^0(\R)$ and $ID^{00}(\R)$ which are 
the class of infinitely divisible distributions on $\R$, the class of quasi-infinitely divisible but non-infinitely divisible distributions on $\R$ and the class
of distributions on $\R$ which are not quasi-infinitely divisible, respectively.

\vskip3mm

On the other hand, let $\zeta(s)$, $s\in\mathbb{C}$, be the Riemann zeta function, then it is known that a normalized function 
$f(t):=\zeta(\sigma +{\rm {i}}t)/\zeta(\sigma)$, where $s:=\sigma +{\rm {i}}t$, $\sigma >1$ and $t\in\R$, 
is an $\R$-valued infinitely divisible characteristic function.
(See, e.g.\,\cite{GK68}.)
As to generalize it to multidimensional case, we have introduced some new multivariable zeta functions and studied 
when their normalized functions can be characteristic functions in \cite{AN11k}, and afterwards, in \cite{AN12e} and \cite{AN12s}.
As a generalization of the series representations of zeta functions, we have introduced the following.

\begin{definition}[Multidimensional Shintani zeta function, \cite{AN12s}]\label{Def}
Let $d,m,r\in\N$, $\vs\in\mathbb{C}^d$ and $(n_1, \ldots , n_r)\in\mathbb{Z}_{\ge 0}^{r}$.
For $\ld_{lj}, u_j > 0$, $\vc_l \in {\mathbb{R}}^d$, where $1\le j\le r$ and $1\le l\le m$, and 
a function $\tht (n_1, \ldots , n_r)\in{\mathbb{C}}$
satisfying $|\tht (n_1, \ldots , n_r)| = O((n_1+ \cdots +n_r)^{\ep})$, for any $\ep >0$, 
we define a multidimensional Shintani zeta function given by
\begin{equation}
Z_S (\vs) := \sum_{n_1 ,\ldots, n_r =0}^{\infty} 
\frac{\tht (n_1,\ldots , n_r)}{\prod_{l=1}^m (\lambda_{l1}(n_1+u_1) 
+\cdots+\lambda_{lr}(n_r+u_r) )^{\langle \vc_l,\vs \rangle}}.
\label{eq:def2}
\end{equation}
\end{definition}

In addition, the corresponding probability distributions on $\rd$ is also defined.
Let $\tht (n_1, \ldots , n_r)$ be a nonnegative or nonpositive definite function and 
write $\vc_l=(c_{l1},\dots,c_{ld}) $ $\in {\mathbb{R}}^d$ in Definition \ref{Def}. 

\begin{definition}[Multidimensional Shintani zeta distribution, \cite{AN12s}]\label{DefSD}
For $(n_1,\dots , n_r)\in\mathbb{Z}_{\ge 0}^{r}$ and $\vsig$ satisfying 
$\min_{1\le l\le m}\langle \vc_l, \vsig\rangle >r/m$, 
we define a multidimensional Shintani zeta random variable $X_{\vsig}$ 
with probability distribution on $\rd$ given by
\begin{align*}
{\rm Pr} \Biggl(X_{\vsig}= \biggl(\,& -\sum_{l=1}^m c_{l1} \log \bigl(\lambda_{l1}(n_1+u_1) +\cdots+\lambda_{lr}(n_r+u_r)\bigr),\\
&\dots , -\sum_{l=1}^m c_{ld} \log \bigl(\lambda_{l1}(n_1+u_1) +\cdots+\lambda_{lr}(n_r+u_r)\bigr)\,\biggr) \Biggr)\\
=\ \ \ \ &\frac{\tht (n_1,\ldots , n_r)}{Z_S(\vsig)}
\prod_{l=1}^m \bigl(\lambda_{l1}(n_1+u_1) +\cdots+\lambda_{lr}(n_r+u_r) \bigr)^{-\langle \vc_l,\vsig \rangle}.
\end{align*}
\end{definition}

We may not have the infinite divisibility of distributions by series representation like above, 
so that we treated Euler products as to obtain infinitely divisible zeta distributions on $\rd$ in \cite{AN12e}.
The definition of the product is as follows.

\begin{definition}[Multidimensional polynomial Euler product, \cite{AN12e}]\label{def:EP}
Let $d,k\in\N$ and $\vs\in\mathbb{C}^d$.
For $-1 \le \alpha_h(p) \le 1$ and $\va_h \in {\mathbb{R}}^d$, $1\le h\le k$ and $p$ is a prime number,
we define multidimensional polynomial Euler product given by
\begin{equation}
Z_E (\vs) = \prod_p \prod_{h=1}^k \left( 1 - \alpha_h(p) p^{-\langle \va_h,\vs\rangle} \right)^{-1}.
\label{eq:def1}
\end{equation}
\end{definition}

In \cite{AN12e}, we have given some necessary and sufficient conditions for several multidimensional polynomial Euler products 
to generate $\rd$-valued characteristic functions, which included infinite divisibility as well.
Still, there remain many unknown properties of behaviors of multivariable zeta functions in view of probability theory such as the cases except 
under the conditions given in our papers \cite{AN12e} and \cite{AN12s}.
In this paper, we treat more simple multivariable zeta functions, 
here simple means that $\alpha_h (p) = 0$ except for some finite subset of prime numbers $p$ in \eqref{eq:def1},
and look more carefully at what happens in the same view as above. 
The functions we use are the following.

\begin{definition}
Let $p$ be a prime number. 
For $s_1, s_2\in\mathbb{C}$ with $\Re{(s_1)}, \Re{(s_2)}>0$, define functions $\zeta_p^\#$ and $\zeta_p^*$ given by
\begin{equation*}
\zeta_p^\# (\vs) = \frac{1}{(1-p^{-s_1})(1-p^{-s_2})}, \q \zeta_p^* (\vs) = \frac{1}{(1+p^{-s_1-s_2})}, 
\end{equation*}
where $\vs:=(s_1, s_2)\in\mathbb{C}^2$, respectively.
\end{definition}
Note that these functions are two-variable cases of finite Euler products which also belongs to the class of multidimensional polynomial Euler products as mentioned above.

\vskip3mm

Denote by $\wh{ID}(\R^2)$, $\wh{ID^0}(\R^2)$ and $\wh{ND}(\R^2)$ the class of $\R^2$-valued infinitely divisible characteristic functions, 
the class of $\R^2$-valued quasi-infinitely divisible but non-infinitely divisible characteristic functions and the class of $\R^2$-valued functions not 
even characteristic functions, respectively.
Our purpose is to classify some normalized functions of two-variable finite Euler products above and their products 
into $\wh{ID}(\R^2)$, $\wh{ID^0}(\R^2)$ and $\wh{ND}(\R^2)$.
The case $ID^{00}(\R^2)$ does not appear in our story.

\section{Main results}

In this section, we give our main results.
We only treat the case $\R^2$, so that we rewrite $\wh{ID}(\R^2)$, $\wh{ID^0}(\R^2)$ and $\wh{ND}(\R^2)$ by $\wh{ID}$, $\wh{ID^0}$ and $\wh{ND}$, respectively.
Proofs of theorems in this section will be given in Section 3.
As to give them, we setup some functions and variables.

\vskip3mm

Put $s_1:=\sigma_1+{\rm{i}}t_1$, $s_2:=\sigma_2+{\rm{i}}t_2$, $\vsig :=(\sigma_1,\sigma_2)$ and $\vt:=(t_1,t_2)$, 
where $\sigma_1, \sigma_2>0$ and  $t_1, t_2\in\R$.
Then, for $\vs =(s_1,s_2)$, we have $\vs=\vsig +{\rm {i}}\vt$.
Now define two functions
\begin{equation*}
\begin{split}
g_p^\# (\vsig,\vt) &:= \frac{1}{(1-p^{-(\sigma_1+{\rm{i}}t_1}))(1-p^{-(\sigma_2+{\rm{i}}t_2)})}\left(=\zeta_p^\#(\vs)\right), \\ 
g_p^* (\vsig,\vt) &:= \frac{1}{(1+p^{-(\sigma_1+{\rm{i}}t_1)-(\sigma_2+{\rm{i}}t_2)})}\left(=\zeta_p^*(\vs)\right)
\end{split}
\end{equation*}
and corresponding normalized functions
\begin{equation*}
G_p^\# (\vsig,\vt) := \frac{g_p^\# (\vsig,\vt)}{g_p^\# (\vsig,{\Vec{0}})}, \q
G_p^* (\vsig,\vt) := \frac{g_p^* (\vsig,\vt)}{g_p^* (\vsig,{\Vec{0}})} .
\end{equation*}

By following the history of the Riemann zeta function and its definable characteristic function, it seems to be natural to treat $G_p^\#$ and $G_p^*$ 
to find out whether $\zeta_p^\#$ and $\zeta_p^*$ can generate characteristic functions of distributions on $\R^2$ or not.

\begin{theorem}\label{th:0}
We have the following.
\begin{description}
 \item[(1)] $G_p^\# \in \wh{ID}$. 
 \item[(2)] $G_p^* \in \wh{ND}$. 
 \item[(3)] $G_p^\#G_p^* \in \wh{ID^0}$. 
\end{description}
\end{theorem}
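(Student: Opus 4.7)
The plan is to read off the (possibly signed) L\'evy measure of each function from a principal-branch logarithmic expansion in powers of $p^{-s_j}$, then compare with the L\'evy–Khintchine representation of Proposition~\ref{pro:LK1} and its quasi-infinitely divisible analogue, using uniqueness of the triplet to distinguish $\wh{ID}$ from $\wh{ID^0}$. Part~(2) is an exception: it will follow from the elementary modulus bound $|\wh\mu|\le 1$ for any characteristic function.

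For part~(1), using $-\log(1-x)=\sum_{k\ge1}x^k/k$ (valid because $\sigma_j>0$ gives $|p^{-s_j}|<1$) and subtracting the value at $\vt=\vz$, I would obtain
\[
\log G_p^\#(\vsig,\vt)=\sum_{j=1}^{2}\sum_{k=1}^{\infty}\frac{p^{-k\sigma_j}}{k}\bigl(e^{-{\rm i}kt_j\log p}-1\bigr),
\]
which is already a L\'evy–Khintchine exponent for a positive, finite, discrete L\'evy measure supported on the coordinate axes at the points $(-k\log p,0)$ and $(0,-k\log p)$, $k\ge1$; the truncation term is absorbed in a drift $\gm\in\R^2$. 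This puts $G_p^\#$ in $\wh{ID}$. For part~(2), writing $u:=p^{-\sigma_1-\sigma_2}\in(0,1)$ and $\tht:=-(t_1+t_2)\log p$, a direct computation yields
\[
|G_p^*(\vsig,\vt)|^2=\frac{(1+u)^2}{1+2u\cos\tht+u^2},
\]
which strictly exceeds $1$ whenever $\cos\tht<1$; since a characteristic function always has modulus $\le 1$, this forces $G_p^*\in\wh{ND}$.

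For part~(3), I would additionally invoke $-\log(1+x)=\sum_{k\ge1}(-1)^k x^k/k$, combining it with the expansion from part~(1) to write
\[
\log(G_p^\#G_p^*)(\vsig,\vt)=\int_{\R^2}\bigl(e^{{\rm i}\la\vt,x\ra}-1\bigr)\,\nu(dx),
\]
where $\nu$ is the signed discrete measure obtained by augmenting the positive axis measure of part~(1) with atoms of weight $(-1)^k p^{-k(\sigma_1+\sigma_2)}/k$ at the diagonal points $(-k\log p,-k\log p)$, $k\ge1$. Its total variation $|\nu|$ is finite, $\nu(\{\vz\})=0$, and crucially the axis atoms and diagonal atoms occupy disjoint subsets of $\R^2$. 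To confirm that $G_p^\#G_p^*$ is a \emph{bona fide} characteristic function I would expand
\[
g_p^\#g_p^*(\vs)=\sum_{a,b\ge0}\Bigl(\sum_{j=0}^{\min(a,b)}(-1)^j\Bigr)p^{-as_1-bs_2},
\]
whose coefficients lie in $\{0,1\}$, so that dividing by $g_p^\#g_p^*(\vsig,\vz)$ identifies $G_p^\#G_p^*$ as the characteristic function of an honest discrete probability measure on $\R^2$. This gives quasi-infinite divisibility, and since the atom of $\nu$ at $(-\log p,-\log p)$ has mass $-p^{-\sigma_1-\sigma_2}<0$ and, by the support-disjointness, cannot be absorbed by any positive axis atom, uniqueness of the quasi-triplet forces the Jordan-negative part of $\nu$ to be nonzero. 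Hence $G_p^\#G_p^*\in\wh{ID^0}$.

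The main obstacle I anticipate is in part~(3), where two things must be checked simultaneously: that $G_p^\#G_p^*$ is a genuine characteristic function, and that its quasi-L\'evy measure has a truly nonzero negative Jordan component. The former reduces to the combinatorial identity $\sum_{j=0}^{k}(-1)^j\in\{0,1\}$ in the Dirichlet expansion; the latter relies on the disjoint supports of the axis and diagonal series in the logarithmic expansion, without which a positive axis atom could in principle cancel the offending negative diagonal atom. Parts~(1) and~(2) are routine by comparison.
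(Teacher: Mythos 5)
Your proof is correct and follows essentially the same route as the paper: expansion of the logarithm into a discrete (possibly signed) L\'evy measure for parts (1) and (3), the modulus bound $|\wh\mu|\le 1$ for part (2), nonnegativity of the Dirichlet coefficients to get the characteristic-function claim in (3), and disjointness of the axis and diagonal supports to guarantee a nonzero negative Jordan part. The only cosmetic difference is in (3), where you compute the Dirichlet coefficient directly as $\sum_{j=0}^{\min(a,b)}(-1)^j\in\{0,1\}$, whereas the paper first rewrites $1+p^{-s_1-s_2}=(1-p^{-2(s_1+s_2)})/(1-p^{-s_1-s_2})$ and then invokes the identity $\frac{1-XY}{(1-X)(1-Y)}=1+\sum_{n\ge1}(X^n+Y^n)$ before appealing to the Shintani zeta distribution definitions.
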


Now we add following functions.
For complex variables $s_1,s_2$ satisfying the conditions above, put
\begin{equation*}
\begin{split}
&f_p (\vsig,\vt) := \frac{1}{(1-p^{-s_1-s_2})},\\
&g_p (\vsig,\vt) := \frac{1}{(1-p^{-s_1})(1-p^{-s_2})(1+p^{-s_1-s_2})},\\
&h_p (\vsig,\vt) := \frac{1}{(1+p^{-s_1})(1+p^{-s_2})(1-p^{-s_1-s_2})}.
\end{split}
\end{equation*}

and, respectively, corresponding normalized functions

\begin{equation*}
F_p(\vsig,\vt) := \frac{f_p (\vsig,\vt)}{f_p (\vsig,{\Vec{0}})}, \q G_p(\vsig,\vt) := \frac{g_p (\vsig,\vt)}{g_p (\vsig,{\Vec{0}})}, \q H_p(\vsig,\vt) := \frac{h_p (\vsig,\vt)}{h_p (\vsig,{\Vec{0}})}.
\end{equation*}

Then, we have following three theorems.

\begin{theorem}\label{th:1}
We have the following.
\begin{description}
 \item[(1)] $F_p \in \wh{ID}$. 
 \item[(2)] $G_p \in \wh{ID^0}$. 
 \item[(3)] $H_p \in \wh{ND}$. 
\end{description}
\end{theorem}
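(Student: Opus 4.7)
Part (1) follows from an explicit L\'evy--Khintchine representation. Since $f_p$ depends on $(s_1,s_2)$ only through $s_1+s_2$, expanding $-\log(1-z)=\sum_{k\ge 1}z^k/k$ gives
\[
\log F_p(\vsig,\vt)=\sum_{k=1}^{\infty}\frac{p^{-k(\sigma_1+\sigma_2)}}{k}\bigl(e^{{\rm i}\langle\vt,\vx_k\rangle}-1\bigr),\qquad \vx_k:=(-k\log p,-k\log p)\in\R^2.
\]
The positive measure $\nu:=\sum_{k\ge 1}k^{-1}p^{-k(\sigma_1+\sigma_2)}\delta_{\vx_k}$ is finite thanks to the geometric decay of its weights, it satisfies $\nu(\{\vz\})=0$, and hence \eqref{lev} holds trivially; since $\nu$ is finite the compensator $\int{\rm i}\langle\vt,x\rangle(1+|x|^2)^{-1}\nu(dx)$ is absolutely integrable and I would absorb it into an appropriate drift $\gm\in\R^2$. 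Proposition \ref{pro:LK1}(iii) then produces an infinitely divisible $\mu$ with $\wh\mu=F_p$, giving $F_p\in\wh{ID}$.

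Part (2) is essentially already proved. The factorization $g_p=\zeta_p^\#\cdot\zeta_p^*$ of rational functions passes through to the normalizations to yield the pointwise identity $G_p=G_p^\#\cdot G_p^*$, and the right-hand side lies in $\wh{ID^0}$ by Theorem \ref{th:0}(3).

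For part (3), the plan is to falsify the universal bound $|\wh{\mu}|\le 1$ obeyed by every characteristic function. Taking $t_1=t_2=\pi/\log p$ makes $p^{-{\rm i}t_j}=-1$ for $j=1,2$ and $p^{-{\rm i}(t_1+t_2)}=1$, so the denominator of $h_p(\vsig,\vt)$ reduces to the positive real number $(1-p^{-\sigma_1})(1-p^{-\sigma_2})(1-p^{-\sigma_1-\sigma_2})$, whereas $h_p(\vsig,\vz)^{-1}=(1+p^{-\sigma_1})(1+p^{-\sigma_2})(1-p^{-\sigma_1-\sigma_2})$. Cancelling the common positive factor gives
\[
|H_p(\vsig,\vt)|=\frac{(1+p^{-\sigma_1})(1+p^{-\sigma_2})}{(1-p^{-\sigma_1})(1-p^{-\sigma_2})}>1,
\]
which no characteristic function can achieve, and thus $H_p\in\wh{ND}$.

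No part of the argument is particularly deep: (2) is immediate from the already-proved Theorem \ref{th:0}(3), and (3) is a single well-chosen modulus computation. The only mildly delicate step is (1), where one must verify that the finite L\'evy measure $\nu$ really does permit moving the compensator into the drift so that the representation matches \eqref{INF} exactly; this is automatic but is the one place where writing things out carefully is warranted.
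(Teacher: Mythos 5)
Your proposal is correct and follows essentially the same route as the paper: part (1) is the same explicit Taylor expansion giving a finite (compound--Poisson) L\'evy measure concentrated on the ray through $(1,1)$, part (2) is the identical observation that $g_p=g_p^\#g_p^*$ reduces the claim to Theorem \ref{th:0}(3), and part (3) picks the same test point $\vt_0=(t_0,t_0)$ with $p^{-{\rm i}t_0}=-1$ and does the same modulus computation before invoking Proposition \ref{pro:Sato}. The only cosmetic difference is that in (1) you spell out the bookkeeping of absorbing the compensator into the drift, whereas the paper simply cites the compound-Poisson structure from its proof of Theorem \ref{th:0}(1).
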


\begin{theorem}\label{th:2}
We have the following.
\begin{description}
 \item[(1)] $F_p G_p \in \wh{ID}$. 
 \item[(2)] $G_p H_p \in \wh{ID}$. 
 \item[(3)] $H_p F_p \in \wh{ND}$. 
\end{description}
\end{theorem}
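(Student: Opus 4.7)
The plan is to handle (1) and (2) by explicit algebraic simplification of the products---the ``wrong-sign'' factors $(1+p^{-\cdot})$ cancel via $(1-x)(1+x)=1-x^2$---and to handle (3) by exhibiting a single point at which $|H_pF_p|>1$.

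A direct computation gives
\begin{align*}
f_pg_p &= \frac{1}{(1-p^{-s_1})(1-p^{-s_2})(1-p^{-2(s_1+s_2)})},\\
g_ph_p &= \frac{1}{(1-p^{-2s_1})(1-p^{-2s_2})(1-p^{-2(s_1+s_2)})},\\
h_pf_p &= \frac{1}{(1+p^{-s_1})(1+p^{-s_2})(1-p^{-s_1-s_2})^2}.
\end{align*}
After normalization, both $F_pG_p$ and $G_pH_p$ become finite products of factors of the form $(1-p^{-k\sigma})/(1-p^{-ks})$ in which the exponent of $p$ belongs to $\{s_1,s_2,s_1+s_2\}$ and $k\in\{1,2\}$. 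Each such factor is the characteristic function of a geometric distribution supported on a one-dimensional sublattice of $\R^2$; such distributions are compound Poisson and therefore infinitely divisible---this is exactly the argument already used for $F_p$ and $G_p^\#$ in Theorems \ref{th:0} and \ref{th:1}. Since $\wh{ID}$ is closed under multiplication (convolution preserves infinite divisibility), parts (1) and (2) follow at once.

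For (3), I would take $\vt_0=(\pi/\log p,\pi/\log p)$. Then $p^{-s_j}=-p^{-\sigma_j}$ while $p^{-s_1-s_2}=p^{-\sigma_1-\sigma_2}$, so the $(1-p^{-s_1-s_2})^2$ factors cancel in the normalization and
$$|H_pF_p(\vsig,\vt_0)|=\frac{(1+p^{-\sigma_1})(1+p^{-\sigma_2})}{(1-p^{-\sigma_1})(1-p^{-\sigma_2})}>1,$$
contradicting $|\ph|\le 1$ for any characteristic function $\ph$. The only genuine obstacle is this choice of $\vt_0$: the half-period $\pi/\log p$ is engineered so that it negates each $p^{-s_j}$ (shrinking the $(1+p^{-s_j})$ factors in the denominator) while leaving $p^{-s_1-s_2}$ invariant. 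Parts (1) and (2) are then algebraic bookkeeping on top of the already-established infinite divisibility of the elementary geometric factors.
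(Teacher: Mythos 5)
Your proposal is correct and follows essentially the same route as the paper: the factorizations $f_pg_p=\bigl((1-p^{-s_1})(1-p^{-s_2})(1-p^{-2(s_1+s_2)})\bigr)^{-1}$ and $g_ph_p=\bigl((1-p^{-2s_1})(1-p^{-2s_2})(1-p^{-2(s_1+s_2)})\bigr)^{-1}$ make explicit at the Euler-product level the same $(1-x)(1+x)=1-x^2$ sign cancellation that the paper carries out inside the series defining the L\'evy measures $N^{F_pG_p}_{\vsig}$ and $N^{G_pH_p}_{\vsig}$, and both yield the same decomposition into geometric (compound Poisson) factors. For (3), both you and the paper choose $\vt_0$ with $p^{-{\rm i}t_0}=-1$ so that $p^{-s_j}=-p^{-\sigma_j}$ while $p^{-s_1-s_2}=p^{-\sigma_1-\sigma_2}$, obtain $|H_pF_p(\vsig,\vt_0)|=\frac{(1+p^{-\sigma_1})(1+p^{-\sigma_2})}{(1-p^{-\sigma_1})(1-p^{-\sigma_2})}>1$, and conclude via Proposition \ref{pro:Sato}.
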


\begin{theorem}\label{th:3}
Let $p$ and $q$ be distinct prime numbers. 
We have the following.
\begin{description}
 \item[(1)] $F_p G_q \in \wh{ID^0}$. 
 \item[(2)] $G_p H_q \in \wh{ND}$. 
 \item[(3)] $H_p F_q \in \wh{ND}$. 
\end{description}
\end{theorem}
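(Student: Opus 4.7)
The plan is to handle part $(1)$ by combining the (quasi-)L\'evy--Khintchine representations of $F_p$ and $G_q$, and to handle $(2)$ and $(3)$ by exhibiting $\vt\in\R^2$ at which the modulus of the normalized product exceeds $1$. As preparation I would record, for any prime $r$, the power-series expansion
\[
-\log(1\pm r^{-s})=\mp\sum_{k\ge 1}(\pm 1)^{k-1}r^{-ks}/k,
\]
which equips each of $F_r,G_r,H_r$ with a quasi-LK representation whose (quasi-)L\'evy measure is purely atomic and supported on the discrete lattice generated by $\log r$. The structural observation I would lean on throughout is that, since $\log p$ and $\log q$ are $\Q$-linearly independent, the atom-supports for $r=p$ and $r=q$ are disjoint.

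For $(1)$, I would first note that $F_pG_q$ is a characteristic function, as a product of two characteristic functions (Theorem \ref{th:1}). Adding the expansions of $\log F_p$ and $\log G_q$ then equips $F_pG_q$ with a quasi-LK representation having signed L\'evy measure $\nu=\nu_{F_p}+\nu_{G_q}$, whence $F_pG_q$ is quasi-infinitely divisible. To exclude $\wh{ID}$, I would point to the $k=1$ term from the factor $(1+q^{-s_1-s_2})^{-1}$ in $G_q$, which contributes the negative atom $-q^{-\sigma_1-\sigma_2}\delta_{(-\log q,-\log q)}$ to $\nu$; by disjointness of supports this atom cannot be cancelled by $\nu_{F_p}$, so $\nu$ has nonzero negative part, and uniqueness of the quasi-LK representation forces $F_pG_q\notin\wh{ID}$.

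For $(2)$, I would use that $G_p\equiv 1$ on the period lattice $\Lambda_p:=(2\pi/\log p)\Z^2$, so it suffices to find $\vt\in\Lambda_p$ with $|H_q(\vsig,\vt)|>1$. Setting $\omega:=\log q/\log p$ (irrational, by unique factorization), Weyl equidistribution supplies $(k,m)\in\Z^2$ with $(k\omega,m\omega)\bmod 1$ arbitrarily close to $(1/2,1/2)$; at the corresponding $\vt\in\Lambda_p$ the three factors in the denominator of $H_q$ are approximately $(1-q^{-\sigma_1})$, $(1-q^{-\sigma_2})$, $(1-q^{-\sigma_1-\sigma_2})$, producing
\[
|H_q(\vsig,\vt)|\approx \frac{(1+q^{-\sigma_1})(1+q^{-\sigma_2})}{(1-q^{-\sigma_1})(1-q^{-\sigma_2})}>1.
\]
Part $(3)$ is symmetric: $F_q\equiv 1$ on $L:=\{\vt:t_1+t_2\in(2\pi/\log q)\Z\}$, and taking $t_1=\pi/\log p$, $t_2=2\pi k/\log q-\pi/\log p$ with $k$ chosen by Weyl so that $k\log p/\log q\bmod 1$ is close to $0$ makes $t_2\log p$ close to $\pi\bmod 2\pi$, yielding the analogous lower bound $(1+p^{-\sigma_1})(1+p^{-\sigma_2})/((1-p^{-\sigma_1})(1-p^{-\sigma_2}))>1$ for $|H_p(\vsig,\vt)|$. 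The hard part will be upgrading these density approximations to strict inequalities, but since the target ratios exceed $1$ by a margin depending only on $\vsig$, continuity of the modulus ensures that any sufficiently accurate diophantine approximation suffices.
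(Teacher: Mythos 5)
Your proposal is correct, and for part (1) it coincides with the paper's argument: both expand the logarithm into atomic quasi-L\'evy measures supported on $\{\log p^r\}$ and $\{\log q^r\}$, invoke unique factorization to see that the negative atom coming from $(1+q^{-s_1-s_2})^{-1}$ cannot be cancelled by anything else, and conclude via uniqueness of the (quasi-)LK triplet.

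For parts (2) and (3), however, your route is genuinely different from the paper's and is cleaner. The paper writes out $2\log|G_p H_q|$ (resp.\ $2\log|F_p H_q|$) as a sum of six (resp.\ four) infinite series, truncates each at $r=2R$, bounds the tails by $\varepsilon$, and applies Kronecker's simultaneous approximation theorem (Proposition \ref{pro:kroap1}) to find $t_0$ making $p^{{\rm i}t_0}$ near $1$ and $q^{{\rm i}t_0}$ near $-1$ at once; it then assembles a somewhat delicate lower bound (\eqref{eq:Gp}--\eqref{EQ}) from which positivity of $\log|G_pH_q|$ drops out. You instead observe that $G_p$ is \emph{exactly} $1$ on the period lattice $\Lambda_p=(2\pi/\log p)\Z^2$ (resp.\ $F_q$ is exactly $1$ on $\{t_1+t_2\in(2\pi/\log q)\Z\}$), so that it suffices to drive the single factor $H_q$ (resp.\ $H_p$) above modulus $1$ while staying on that set; this only needs the one-dimensional density of $\{k\omega\bmod 1\}$ for the single irrational $\omega=\log q/\log p$, plus continuity of $|H_q|$ near the target point where the bound $\tfrac{(1+q^{-\sigma_1})(1+q^{-\sigma_2})}{(1-q^{-\sigma_1})(1-q^{-\sigma_2})}>1$ holds with a margin depending only on $\vsig$. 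What you buy is the elimination of the paper's two-parameter $\varepsilon,\varepsilon',R$ bookkeeping and the replacement of simultaneous Kronecker approximation by ordinary density of a single irrational rotation; what the paper's more pedestrian estimate buys is uniformity of exposition with the rest of Section 3 (it never invokes periodicity of the normalized factors). Two small nits: in part (1) the negative atom sits at $+(\log q,\log q)$ under the paper's sign convention $\int(e^{-{\rm i}\langle\vt,x\rangle}-1)\,N(dx)$, not at $(-\log q,-\log q)$, though this is purely notational; and in part (2) you may simply take $m=k$ so that only a single approximation $k\omega\approx 1/2\bmod 1$ is needed.
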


\section{Proofs of Theorems}

In this section, we give the proofs of four theorems mentioned in Section 2.
In some of the proofs, we use the following known fact.

\begin{proposition}[See, e.g.\,\cite{S99}]\label{pro:Sato}
Let $\mu$ be a probability measure on $\rd$. 
Then, it holds that $|\wh\mu (t)|\le 1$ for any $t\in\rd$.\\
\end{proposition}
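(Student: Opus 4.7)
The plan is to reduce the inequality directly to the triangle inequality for integrals. By definition, the characteristic function is
\[
\wh{\mu}(t) = \int_{\rd} e^{{\rm i}\langle t, x\rangle}\,\mu(dx),
\]
so the first step is to pass the absolute value inside the integral using the standard bound $\bigl|\int f\,d\mu\bigr| \le \int |f|\,d\mu$, which holds for any $\mu$-integrable complex-valued function $f$ (this follows, for instance, by writing $\int f\,d\mu = |\int f\,d\mu| e^{{\rm i}\theta}$ for some real $\theta$ and taking the real part of $e^{-{\rm i}\theta} f$, or by approximation via simple functions).

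Next, for each $x\in\rd$ the inner product $\langle t,x\rangle$ is real, so $|e^{{\rm i}\langle t,x\rangle}|=1$. Combining these two observations yields
\[
|\wh{\mu}(t)| \;\le\; \int_{\rd} \bigl|e^{{\rm i}\langle t,x\rangle}\bigr|\,\mu(dx) \;=\; \int_{\rd} 1\,\mu(dx) \;=\; \mu(\rd).
\]
Finally, since $\mu$ is a probability measure, $\mu(\rd)=1$, and the bound $|\wh{\mu}(t)|\le 1$ follows for every $t\in\rd$.

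There is essentially no obstacle here: the only non-mechanical point is the justification of the integral triangle inequality for complex-valued integrands, which is a standard fact of measure theory that requires no input specific to characteristic functions or to the setting of this paper. Thus the proof will occupy only a couple of lines in the paper.
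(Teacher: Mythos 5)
Your proof is correct: it is the standard argument (triangle inequality for the integral, $|e^{{\rm i}\langle t,x\rangle}|=1$, and $\mu(\rd)=1$), which is exactly the classical fact the paper simply cites from \cite{S99} without reproducing a proof. Nothing is missing.
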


\subsection{Proof of Theorem \ref{th:0}}

The proof of Theorem \ref{th:0} is as follows.

\begin{proof}[Proof of Theorem \ref{th:0} $(1)$]
Let $l\in\{1,2\}$. 
Note that $|-p^{-\sigma_l +{\rm {i}}t_l}|=|-p^{-\sigma_l}|<1$ for $\sigma_l >0$.
By the Taylor series of $\log \left(1+(-x)\right)=\sum_{r=1}^{\infty}(-1)^{r+1}r^{-1}(-x)^r=-\sum_{r=1}^{\infty}r^{-1}x^r$, $|x|<1$,  
we have
\begin{equation*}
\begin{split}
\log G_p^\# (\vsig,\vt) &= - \log \frac{(1-p^{-\sigma_1-{\rm{i}}t_1})(1-p^{-\sigma_2-{\rm{i}}t_2})}{(1-p^{-\sigma_1})(1-p^{-\sigma_2})} \\ 
&= \, \sum_{r=1}^{\infty} \frac{1}{r} p^{-r\sigma_1} \bigl( p^{-r{\rm{i}}t_1} -1 \bigr) +
\sum_{r=1}^{\infty} \frac{1}{r} p^{-r\sigma_2} \bigl( p^{-r{\rm{i}}t_2} -1 \bigr)\\
&= \, \int_{\R^2} \big( e^{-{\rm{i}}\langle\vt , x\rangle} -1 \bigr) N^{G_p^\#}_{\vsig}(dx),
\end{split}
\end{equation*}
where 
$$
N^{G_p^\#}_{\vsig}(dx):=\sum_{r=1}^{\infty} 
\frac{1}{r} p^{-r\sigma_1} \delta_{\log p^r (1,0)} (dx)+ \sum_{r=1}^{\infty} \frac{1}{r} p^{-r\sigma_2} \delta_{\log p^r (0,1)} (dx)
$$
and $\delta_{\al\va}$ is the delta measure at $\al\va$ for $\al\in\R$ and $\va\in\R^2$.
Here $N^{G_p^\#}_{\vsig}$ is a finite L\'evy measure on $\R^2$ since 
\begin{align*}
\int_{\R^2}N^{G_p^\#}_{\vsig}(dx)&=\sum_{l=1,2}\sum_{r=1}^{\infty} \frac{1}{r} p^{-r\sigma_l}
\le \sum_{l=1,2}\left(p^{-\sigma_l}+\sum_{r=2}^{\infty}\frac{1}{r}p^{-r\sigma_l} \right)\\
&\le \sum_{l=1,2}\left(p^{-\sigma_l}+\int_1^{\infty}\frac{1}{x}p^{-x\sigma_l} dx\right) \le \sum_{l=1,2} \left(p^{-\sigma_l}+\left(\sigma_l\log p\right)^{-1}\right)<\infty.
\end{align*}
Hence $G_p^\#$ is an infinitely divisible (actually, it is compound Poisson) characteristic function.
\end{proof}

\begin{proof}[Proof of Theorem \ref{th:0} $(2)$]
Obviously, there exists $\vt_0 = (t_0,t_0)$ such that $p^{-2{\rm{i}}t_0} =-1$. 
Then, we have
$$
|G_p^*(\vsig,\vt_0)| = \left| \frac{1+p^{-\sigma_1-\sigma_2}}{1+p^{-\sigma_1-\sigma_2-2{\rm{i}}t_0}} \right|
=\frac{1+p^{-\sigma_1-\sigma_2}}{1-p^{-\sigma_1-\sigma_2}} >1.
$$
By Proposition \ref{pro:Sato}, we have $G_p^* \in \wh{ND}$.
\end{proof}

\begin{proof}[Proof of Theorem \ref{th:0} (3)]
First we show that $G_p^\#G_p^*$ is a characteristic function.
We have
$$
g_p^\#(\vsig,\vt)g_p^*(\vsig,\vt) =\frac{1}{(1-p^{-2s_1-2s_2})} \frac{1-p^{-s_1-s_2}}{(1-p^{-s_1})(1-p^{-s_2})}.
$$
For any $X,Y$ with $|X|,|Y|<1$, we also have
$$
\frac{1-XY}{(1-X)(1-Y)} = \frac{1}{1-X} + \frac{1}{1-Y} - 1 = 1 + \sum_{n=1}^\infty \bigl( X^n+Y^n \bigr).
$$
Therefore, we obtain
$$
g_p^\#(\vsig,\vt)g_p^*(\vsig,\vt) = \sum_{l=0}^{\infty} \frac{1}{p^{2l(s_1+s_2)}}
\left( 1 + \sum_{m=1}^{\infty} \frac{1}{p^{ms_1}} +\sum_{n=1}^{\infty} \frac{1}{p^{ns_2}} \right) =
\sum_{l,m,n=1}^{\infty} \frac{A(l)A(m,n)}{l^{s_1+s_2}m^{s_1}n^{s_2}},
$$
where, for $a,b,c \in \N$,
$$
A(l) := 
\begin{cases}
1 & l = 1, \, p^{2a}, \\
0 & \mbox{otherwise},
\end{cases} \qq
A(m,n) := 
\begin{cases}
1 & (m,n)=(1,1), \, (1,p^b), \, (p^c,1), \\
0 & \mbox{otherwise}.
\end{cases}
$$
By Definitions \ref{Def} and \ref{DefSD}, $G_p^\#G_p^*$ is a characteristic function which belongs to multidimensional Shintani zeta distribution.

Next we show that $G_p^\#G_p^*$ is quasi-infinitely divisible.
We have
\begin{equation*}
\begin{split}
&\log G_p^\#(\vsig,\vt)G_p^* (\vsig,\vt) 
= - \log \frac{(1-p^{-\sigma_1-{\rm{i}}t_1})(1-p^{-\sigma_2-{\rm{i}}t_2})(1+p^{-\sigma_1-{\rm{i}}t_1-\sigma_2-{\rm{i}}t_2})}
{(1-p^{-\sigma_1})(1-p^{-\sigma_2})(1+p^{-\sigma_1-\sigma_2})} \\ 
&= \,\sum_{r=1}^{\infty} \frac{1}{r} p^{-r\sigma_1} \bigl( p^{-r{\rm{i}}t_1} -1 \bigr) +
\sum_{r=1}^{\infty} \frac{1}{r} p^{-r\sigma_2} \bigl( p^{-r{\rm{i}}t_2} -1 \bigr)
+\sum_{r=1}^{\infty} \frac{(-1)^r}{r} p^{-r(\sigma_1+\sigma_2)} \bigl( p^{-r{\rm{i}}(t_1+t_2)} -1 \bigr)\\
&= \, \int_{\R^2} \big( e^{-{\rm{i}}\langle\vt , x\rangle} -1 \bigr) N^{G_p^\#G_p^*}_{\vsig}(dx),
\end{split}
\end{equation*}
where
\begin{align*}
N^{G_p^\#G_p^*}_{\vsig}(dx):=&\sum_{r=1}^{\infty} \frac{1}{r} p^{-r\sigma_1} \delta_{\log p^r (1,0)} (dx)
 +\sum_{r=1}^{\infty} \frac{1}{r} p^{-r\sigma_2} \delta_{\log p^r (0,1)} (dx)\\
& +\sum_{r=1}^{\infty} \frac{(-1)^r}{r} p^{-r(\sigma_1+\sigma_2)} \delta_{\log p^r (1,1)} (dx).
\end{align*}
The measure $N^{G_p^\#G_p^*}_{\vsig}$ is a quasi-L\'evy measure on $\R^2$ since the third term is a signed measure which can not be canceled by the other terms and, as in the proof of Theorem \ref{th:0} $(1)$, we easily obtain $\int_{\R^2}\bigl|N^{G_p^\#G_p^*}_{\vsig}\bigr|(dx)<\infty$.
Hence $G_p^\#G_p^*\in\wh{ID^0}$. 
\end{proof}

\subsection{Proof of Theorem \ref{th:1}}
The statement (2) of Theorem \ref{th:1} coincides with (3) of Theorem \ref{th:0}. 
Thus we only have to show (1) and (3). 
\begin{proof}[Proof of Theorem \ref{th:1} (1)]
We have
\begin{equation*}
\begin{split}
\log F_p (\vsig,\vt) = \int_{\R^2} \big( e^{-{\rm{i}}\langle\vt , x\rangle} -1 \bigr) 
\sum_{r=1}^{\infty} \frac{1}{r} p^{-r(\sigma_1+\sigma_2)} \delta_{\log p^r (1,1)} (dx) .
\end{split}
\end{equation*}
By following the proof of Theorem \ref{th:0} $(1)$, we have $F_p \in \wh{ID}$. 
\end{proof}

\begin{proof}[Proof of Theorem \ref{th:1} (3)]
As in the proof of Theorem \ref{th:0} $(2)$, for $\vt_0 = (t_0,t_0)$ such that $p^{-{\rm{i}}t_0} =-1$, we have
$$
|H_p(\vsig,\vt_0)| = 
\frac{(1+p^{-\sigma_1})(1+p^{-\sigma_2})(1-p^{-\sigma_1-\sigma_2})}{(1-p^{-\sigma_1})(1-p^{-\sigma_2})(1-p^{-\sigma_1-\sigma_2})} = \frac{(1+p^{-\sigma_1})(1+p^{-\sigma_2})}{(1-p^{-\sigma_1})(1-p^{-\sigma_2})} >1.
$$
By following the proof of Theorem \ref{th:0} $(2)$, we have $H_p \in \wh{ND}$.

\end{proof}

\subsection{Proof of Theorem \ref{th:2}}
Now we show Theorem \ref{th:2}.
\begin{proof}[Proof of Theorem \ref{th:2} (1) and (2)]
We have
\begin{align*}
\log F_p (\vsig,\vt)G_p (\vsig,\vt) &= \int_{\R^2} \big( e^{-{\rm{i}}\langle\vt , x\rangle} -1 \bigr) N^{F_pG_p}_{\vsig}(dx) ,\\
\log G_p(\vsig,\vt_0)H_p(\vsig,\vt_0) &= \int_{\R^2} \big( e^{-{\rm{i}}\langle\vt , x\rangle} -1 \bigr) N^{G_pH_p}_{\vsig}(dx) ,
\end{align*}
where 
\begin{align*}
N^{F_pG_p}_{\vsig}(dx) := &\sum_{r=1}^{\infty}\frac{1}{r} p^{-r\sigma_1} \delta_{\log p^r (1,0)} (dx) 
+\sum_{r=1}^{\infty} \frac{1}{r} p^{-r\sigma_2} \delta_{\log p^r (0,1)} (dx)\\
& + \sum_{r=1}^{\infty} \frac{1}{r} p^{-2r(\sigma_1+\sigma_2)} \delta_{\log p^{2r} (1,1)} (dx), \\
N^{G_pH_p}_{\vsig}(dx) :=&\sum_{r=1}^{\infty}\frac{1}{r} p^{-2r\sigma_1} \delta_{\log p^{2r} (1,0)} (dx) + \sum_{r=1}^{\infty} \frac{1}{r} p^{-2r\sigma_2} \delta_{\log p^{2r} (0,1)} (dx)\\
&+ \sum_{r=1}^{\infty} \frac{1}{r} p^{-2r(\sigma_1+\sigma_2)} \delta_{\log p^{2r} (1,1)} (dx)
\end{align*}
are finite L\'evy measures on $\R^2$.
Again, by following the proof of Theorem \ref{th:0} $(1)$, we have $F_pG_p, G_pH_p\in\wh{ID}$.
\end{proof}

\begin{proof}[Proof of Theorem \ref{th:2} (3)]
Again, for $\vt_0$ such that $p^{-{\rm{i}}t_0} =-1$, we have 
$$
|H_p(\vsig,\vt_0)F_p(\vsig,\vt_0)| = 
\frac{(1+p^{-\sigma_1})(1+p^{-\sigma_2})(1-p^{-2\sigma_1-2\sigma_2})^2}{(1-p^{-\sigma_1})(1-p^{-\sigma_2})(1-p^{-2\sigma_1-2\sigma_2})^2} = \frac{(1+p^{-\sigma_1})(1+p^{-\sigma_2})}{(1-p^{-\sigma_1})(1-p^{-\sigma_2})} >1.
$$
By following the proof of Theorem \ref{th:0} $(2)$, we have $H_pF_p\in \wh{ND}$.
\end{proof}

\subsection{Proof of Theorem \ref{th:3}}
Finally, we show Theorem \ref{th:3}.
\begin{proof}[Proof of Theorem \ref{th:3} (1)]
By Theorem \ref{th:1} $(1)$ and $(2)$, $F_pG_q$ is a characteristic function.
So that we only have to show the quasi-infinite divisibility of $F_pG_q$.
We have
\begin{equation*}
\log F_p (\vsig,\vt) G_q (\vsig,\vt) = \int_{\R^2} \big( e^{-{\rm{i}}\langle\vt , x\rangle} -1 \bigr) N^{F_pG_q}_{\vsig}(dx),
\end{equation*}
where
\begin{equation*}
\begin{split}
N^{F_pG_q}_{\vsig}(dx):=& \sum_{r=1}^{\infty} \frac{1}{r} p^{-r(\sigma_1+\sigma_2)} \delta_{\log p^r (1,1)} (dx)
+\sum_{r=1}^{\infty} \frac{1}{r} q^{-r\sigma_1} \delta_{\log q^r (1,0)} (dx)\\
& + \sum_{r=1}^{\infty} \frac{1}{r} q^{-r\sigma_2} \delta_{\log q^r (0,1)} (dx)
 + \sum_{r=1}^{\infty} \frac{(-1)^r}{r} q^{-r(\sigma_1+\sigma_2)} \delta_{\log q^r (1,1)} (dx).
\end{split}
\end{equation*}
Since $r_1 \log p_1 = r_2 \log p_2$ if and only if $r_1=r_2$ and $p_1=p_2$, the fourth term of $N^{F_pG_q}_{\vsig}$ is a
signed measure which can not be canceled by the other terms.
We also have that $\int_{\R^2}\bigl|N^{F_pG_q}_{\vsig}\bigr|(dx)<\infty$ as in the proof of Theorem \ref{th:0} $(1)$.
Thus the measure $N^{F_pG_q}_{\vsig}$ is a quasi-L\'evy measure on $\R^2$.
As similar as the proof of Theorem \ref{th:1} $(2)$, we obtain $F_pG_q\in\wh{ID^0}$.
\end{proof}

For the proof of Theorem \ref{th:3} $(2)$ and $(3)$, we use linear independence of real numbers and the Kronecker's approximation theorem.

\vskip3mm

It is called that real numbers $\theta_1,\ldots,\theta_n$ are {\it linearly independent over the rationals} if $\sum_{k=1}^n c_k \theta_k=0$ with rational multipliers $c_1,\ldots,c_n$ implies $c_1 = \cdots = c_n =0$. 
Put
\begin{equation}
\theta_k := \frac{\log p_k}{2\pi}, \qq 1 \le k \le n,
\label{eq:deftheta}
\end{equation}
where $p_1, \ldots, p_n$ are the first $n$ primes. 
Then $\theta_k$ are linearly independent over the rationals and it can be shown by this way. 
When we suppose $\sum_{k=1}^n c_k \theta_k =0$, namely, $\log (p_1^{c_1} \cdots p_n^{c_n}) =0$, it implies that $p_1^{c_1} \cdots p_n^{c_n}=1$. 
Hence we obtain $c_1 = \cdots = c_n =0$ by the fundamental theorem of arithmetic (or the unique-prime-factorization theorem). 

The following proposition is called the (first form of) Kronecker's approximation theorem. 
\begin{proposition}[{\cite[Theorem 7.9]{Apo2}}]\label{pro:kroap1}
If $\phi_1 , \ldots ,\phi_n$ are arbitrary real numbers, if real numbers $\theta_1,\ldots,\theta_n$ are linearly independent over the rationals, and if $\varepsilon>0$ is arbitrary, then there exists a real number $t$ and rationals $h_1, \ldots, h_n$ such that
$$
|t\theta_k - h_k - \phi_k| < \varepsilon, \qq 1 \le k \le n. 
$$
\end{proposition}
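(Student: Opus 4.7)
The plan is to recast the conclusion as a density statement on the $n$-torus $\R^n/\Z^n$. Let $\pi:\R^n\to\R^n/\Z^n$ be the quotient map, and (interpreting the $h_k$ as integers, which is the nontrivial reading---allowing arbitrary rationals would make the claim trivial by taking $t=0$) observe that the simultaneous inequalities $|t\theta_k - h_k - \phi_k|<\ep$ say precisely that $\pi\bigl(t(\theta_1,\ldots,\theta_n)\bigr)$ lies within $\ep$ of $\pi(\phi_1,\ldots,\phi_n)$ in the sup metric on the torus. So Proposition \ref{pro:kroap1} is equivalent to showing that the one-parameter subgroup $\Gm:=\bigl\{\pi(t\theta_1,\ldots,t\theta_n):t\in\R\bigr\}$ is dense in $\R^n/\Z^n$.

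First I would invoke Weyl's equidistribution criterion in continuous time: $\Gm$ is equidistributed, and in particular dense, in the torus if and only if, for every nonzero $(k_1,\ldots,k_n)\in\Z^n$,
\[
\lim_{T\to\infty}\frac{1}{T}\int_0^T \exp\bigl(2\pi\mathrm{i}\,t\,(k_1\theta_1+\cdots+k_n\theta_n)\bigr)\,dt = 0.
\]
The identity $T^{-1}\int_0^T e^{2\pi\mathrm{i}\al t}\,dt = (2\pi\mathrm{i}\al T)^{-1}(e^{2\pi\mathrm{i}\al T}-1)$ shows the limit vanishes whenever $\al:=k_1\theta_1+\cdots+k_n\theta_n\neq 0$. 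The $\Q$-linear independence hypothesis on $\theta_1,\ldots,\theta_n$ gives exactly $\al\neq 0$ for every nonzero integer tuple $(k_1,\ldots,k_n)$, which closes the argument once Weyl's criterion is in hand.

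The main obstacle is therefore justifying Weyl's criterion itself---that vanishing of all nontrivial character averages forces equidistribution on $\R^n/\Z^n$. The standard route is Stone--Weierstrass: trigonometric polynomials are uniformly dense in $C(\R^n/\Z^n)$, so the functional $f\mapsto\lim_{T\to\infty}T^{-1}\int_0^T (f\circ\pi)(t\theta_1,\ldots,t\theta_n)\,dt$ agrees with $\int f\,dm$ (Haar measure) on a dense subspace and is bounded by $\|f\|_\infty$, hence equals $\int f\,dm$ on all of $C(\R^n/\Z^n)$. Applied to a nonnegative bump function supported in an $\ep$-neighborhood of $\pi(\phi_1,\ldots,\phi_n)$, whose integral against Haar measure is strictly positive, this forces the orbit to enter that neighborhood, yielding the desired density. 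A structural alternative would observe that $\overline{\Gm}$ is a closed subgroup of $\R^n/\Z^n$ and hence of the form $(V+\Z^n)/\Z^n$ for some rational linear subspace $V$ containing $(\theta_1,\ldots,\theta_n)$; if $V\neq\R^n$ then $V$ is cut out by an integer linear relation on the $\theta_k$, contradicting the hypothesis.
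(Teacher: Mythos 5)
The paper does not prove this proposition; it is quoted verbatim as a known result with a pointer to Apostol's Theorem~7.9, so there is no in-paper argument to compare your proposal against — it has to stand alone, and it does. Two points worth recording. First, you correctly caught a slip in the paper's statement: ``rationals $h_1,\ldots,h_n$'' should read \emph{integers}. With rationals allowed the claim is vacuous (take $t=0$ and pick each $h_k$ near $-\phi_k$), whereas Apostol's actual Theorem~7.9, and the use the paper makes of it in the proofs of Theorem~3~(2) and (3) — producing a single $t_0$ with $p^{\mathrm{i}t_0}\approx 1$ and $q^{\mathrm{i}t_0}\approx -1$ simultaneously — require integers. Second, your route via the continuous-time Weyl equidistribution criterion is a standard and correct proof: the character-average computation is right, the $\Q$-linear independence of $\theta_1,\ldots,\theta_n$ is used in exactly the right place to guarantee $k_1\theta_1+\cdots+k_n\theta_n\neq 0$ for nonzero $(k_1,\ldots,k_n)\in\Z^n$, and Stone--Weierstrass plus a bump-function test gives density from equidistribution. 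This is a genuinely different path from the one in Apostol's book (which builds Kronecker's theorem directly by more elementary means without passing through equidistribution); the Weyl route is shorter given the machinery but proves strictly more (equidistribution, not merely density), while Apostol's is self-contained at the level of elementary analysis. The structural alternative you sketch at the end — that the closure of the one-parameter subgroup is a rational subtorus, and a proper one would force an integer linear relation among the $\theta_k$ — is also correct and arguably the cleanest modern formulation. Any of the three would serve the paper's needs.
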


Now we prove Theorem \ref{th:3} $(2)$ and $(3)$.

\begin{proof}[Proof of Theorem \ref{th:3} (2)]
We have
\begin{equation*}
\begin{split}
2\log &\bigl|G_p(\vsig,\vt) H_q(\vsig,\vt)\bigr| = 
\log \bigl(G_p(\vsig,\vt) G_p(\vsig,-\vt) H_q(\vsig,\vt) H_q(\vsig,-\vt) \bigr) \\ 
= \,& \sum_{r=1}^{\infty} \frac{1}{r} p^{-r\sigma_1} \bigl( p^{r{\rm{i}}t_1} + p^{-r{\rm{i}}t_1} -2 \bigr) +
\sum_{r=1}^{\infty} \frac{1}{r} p^{-r\sigma_2} \bigl( p^{r{\rm{i}}t_2} + p^{-r{\rm{i}}t_2} -2 \bigr) \\ 
\,&+\sum_{r=1}^{\infty} \frac{(-1)^r}{r} p^{-r(\sigma_1+\sigma_2)} 
\bigl( p^{r{\rm{i}}(t_1+t_2)} + p^{-r{\rm{i}}(t_1+t_2)} -2 \bigr) \\ 
\,&+\sum_{r=1}^{\infty} \frac{(-1)^r}{r} q^{-r\sigma_1} \bigl( q^{r{\rm{i}}t_1} + q^{-r{\rm{i}}t_1} -2 \bigr) +
\sum_{r=1}^{\infty} \frac{(-1)^r}{r} q^{-r\sigma_2} \bigl( q^{r{\rm{i}}t_2} + q^{-r{\rm{i}}t_2} -2 \bigr) \\ 
\,&+\sum_{r=1}^{\infty} \frac{1}{r} q^{-r(\sigma_1+\sigma_2)} 
\bigl( q^{r{\rm{i}}(t_1+t_2)} + q^{-r{\rm{i}}(t_1+t_2)} -2 \bigr).
\end{split}
\end{equation*}
Let $\sum_{r>R}$ be a sum taken over $r>R$.
For any $\ep >0$, we can see that there exists an integer $R$ such that $\sum_{r>2R}r^{-1}p^{-r\sigma_j}< \varepsilon$ and $\sum_{r>2R}r^{-1}q^{-r\sigma_j}< \varepsilon$, where $\sigma_j = \sigma_1, \sigma_2$ or $\sigma_1+\sigma_2$. 

In the view of $p^{{\rm{i}}t}= e^{{\rm{i}}t\log p} = e^{2\pi {\rm{i}}t\theta_1}$, $q^{{\rm{i}}t} = e^{2\pi {\rm{i}}t\theta_2}$, \eqref{eq:deftheta} and by Proposition \ref{pro:kroap1}, for any $\ep '>0$ independent of $\ep$ and $R$, there exists $t_0 \in \R$ such that
$$
|p^{{\rm{i}}t_0}-1| < \varepsilon',  \q \mbox{and} \q 
|q^{{\rm{i}}t_0}+1| < \varepsilon'.
$$
By the factorization $x^r-1 = (x-1)(x^{r-1}+\cdots+1)$, for any $1 \le r \le 2R$, we have $|p^{r{\rm{i}}t_0}-1| < r\varepsilon' \le 2R\varepsilon'$. 
Similarly, by the factorization $x^{2k-1}+1 = (x+1)(x^{2k-2}-x^{2k-3}+\cdots+1)$ when $r=2k-1\in 2\N -1$ with $k\le R$, one has $|q^{(2k-1){\rm{i}}t_0}+1| < (2k-1)\varepsilon' \le 2R\varepsilon'$. 
Also, by the factorization $x^{2k}-1 = (x+1)(x-1)(x^{2k-2}+x^{2k-4}+\cdots+1)$ when $r=2k\in 2\N$ with $k\le R$, it holds that $|q^{2k{\rm{i}}t_0}-1| < 2k \varepsilon' \le 2R\varepsilon'$.
Hence there exits $t_0\in\R$ such that
\begin{equation*}
\begin{split}
-4R\varepsilon' < p^{r{\rm{i}}t_0} + p^{-r{\rm{i}}t_0} -2 \le 0, \q
&1 \le r \le 2R ,\\
-8R\varepsilon' < p^{2r{\rm{i}}t_0} + p^{-2r{\rm{i}}t_0} -2 \le 0, \q
&1 \le r \le 2R ,\\
-4 -4R\varepsilon' < q^{(2k-1){\rm{i}}t_0} + q^{-(2k-1){\rm{i}}t_0} - 2 < -4 + 4R\varepsilon' , \q
&1 \le k \le R,\\ 
-4R\varepsilon' \le q^{2k{\rm{i}}t_0} + q^{-2k{\rm{i}}t_0} - 2 \le 0, \q
&1 \le k \le R,\\
-8R\varepsilon' \le q^{2k{\rm{i}}t_0} + q^{-2k{\rm{i}}t_0} - 2 \le 0, \q
&1 \le k \le 2R.
\end{split}
\end{equation*}
Hence, for $\vt_0:=(t_0,t_0)\in\R^2$, we have 
\begin{equation}
\begin{split}
\log \bigl(G_p(\vsig,\vt_0)& G_p(\vsig,-\vt_0) \bigr)\\  
&>- 12 \varepsilon-4R\varepsilon' \sum_{r=1}^{2R}  \frac{1}{r} \bigl( p^{-r\sigma_1}
 + p^{-r\sigma_2} \bigr) -8R\varepsilon' \sum_{r=1}^{2R}  \frac{1}{r} p^{-r\sigma_1-r\sigma_2} ,
\end{split}
\label{eq:Gp}
\end{equation}
\begin{equation}
\begin{split}
&\log \bigl(H_q(\vsig,\vt_0) H_q(\vsig,-\vt_0) \bigr) > - 12 \varepsilon
-8R\varepsilon' \sum_{r=1}^{2R}  \frac{1}{r} p^{-r\sigma_1-r\sigma_2}  \\
&+(4 - 4R\varepsilon') \sum_{k=1}^{R}  \frac{1}{2k-1} \bigl( q^{-(2k-1)\sigma_1} + q^{-(2k-1)\sigma_2} \bigr) 
-4R\varepsilon' \sum_{k=1}^{R}  \frac{1}{2k} \bigl( q^{-2k\sigma_1} + q^{-2k\sigma_2} \bigr).
\end{split}
\label{eq:Hq}
\end{equation}
Therefore we obtain
\begin{align}\label{EQ}
2\log |G_p(\vsig,\vt_0) H_q(\vsig,\vt_0)|>-12\ep-4RC'\ep'+C,
\end{align}
where
\begin{align*}
C':=&\sum_{r=1}^{2R}  \frac{1}{r} \bigl( p^{-r\sigma_1}+ p^{-r\sigma_2} \bigr) 
+2\sum_{r=1}^{2R}  \frac{1}{r} p^{-r\sigma_1-r\sigma_2}+2 \sum_{r=1}^{2R}  \frac{1}{r} p^{-r\sigma_1-r\sigma_2}\\
&+\sum_{k=1}^{R}  \frac{1}{2k-1} \bigl( q^{-(2k-1)\sigma_1} + q^{-(2k-1)\sigma_2} \bigr)+ \sum_{k=1}^{R}  \frac{1}{2k} \bigl( q^{-2k\sigma_1} + q^{-2k\sigma_2} \bigr)>0,\\
C:=&\,4\sum_{k=1}^{R}  \frac{1}{2k-1} \bigl( q^{-(2k-1)\sigma_1} + q^{-(2k-1)\sigma_2} \bigr) >0.
\end{align*}
Now suppose $\varepsilon$ is sufficiently small and $\varepsilon'$ such that $4R\varepsilon' < \varepsilon$. 
Then, we finally obtain $\log |G_p(\vsig,\vt_0) H_q(\vsig,\vt_0)|>0$ by the third term of the right-hand side of \eqref{EQ}. 
Hence we have $G_pH_q\in\wh{ND}$ by Proposition \ref{pro:Sato}.
\end{proof}

\begin{proof}[Proof of Theorem \ref{th:3} (3)]
We have
\begin{equation*}
\begin{split}
2\log &\left|F_p(\vsig,\vt) H_q(\vsig,\vt)\right| = 
\log \bigl(F_p(\vsig,\vt) F_p(\vsig,-\vt) H_q(\vsig,\vt) H_q(\vsig,-\vt) \bigr) \\ 
= \,&
\sum_{r=1}^{\infty} \frac{1}{r} p^{-r(\sigma_1+\sigma_2)} 
\bigl( p^{r{\rm{i}}(t_1+t_2)} + p^{-r{\rm{i}}(t_1+t_2)} -2 \bigr) \\ &+
\sum_{r=1}^{\infty} \frac{(-1)^r}{r} q^{-r\sigma_1} \bigl( q^{r{\rm{i}}t_1} + q^{-r{\rm{i}}t_1} -2 \bigr) +
\sum_{r=1}^{\infty} \frac{(-1)^r}{r} q^{-r\sigma_2} \bigl( q^{r{\rm{i}}t_2} + q^{-r{\rm{i}}t_2} -2 \bigr) \\ 
&+
\sum_{r=1}^{\infty} \frac{1}{r} q^{-r(\sigma_1+\sigma_2)} 
\bigl( q^{r{\rm{i}}(t_1+t_2)} + q^{-r{\rm{i}}(t_1+t_2)} -2 \bigr) .
\end{split}
\end{equation*}
Now we can follow the proof of Theorem \ref{th:3} $(2)$, there exists $\vt_0\in\R^2$ such that \eqref{eq:Hq} and
\begin{equation}\label{eq:Fp}
\log \bigl(F_p(\vsig,\vt_0) F_p(\vsig,-\vt_0) \bigr) > - 4 \varepsilon
-8R\varepsilon' \sum_{r=1}^{2R}  \frac{1}{r} p^{-r\sigma_1-r\sigma_2}, 
\end{equation}
instead of \eqref{eq:Gp}, hold.
This implies there exists $\vt_0\in\R^2$ such that $\log \bigl|F_p(\vsig,\vt_0)$ $H_q(\vsig,\vt_0)\bigr|>0$.
Hence we also obtain $F_pH_q\in\wh{ND}$ in the same way as the proof of Theorem \ref{th:3} $(2)$.
\end{proof}



 
\end{document}